\documentclass[11pt]{article}
\usepackage{cmap}
\usepackage[T1]{fontenc}
\usepackage[utf8]{inputenc}
\usepackage{microtype}
\usepackage[english]{babel}
\usepackage[a4paper,margin=29mm]{geometry}
\usepackage{amsmath,amssymb,amsthm,mathtools}
\usepackage{enumitem}
\usepackage{xcolor}
\usepackage{indentfirst}
\usepackage[numbers,sort&compress]{natbib}
\usepackage[colorlinks=true,linkcolor=blue!45!black,citecolor=blue!45!black,urlcolor=blue!45!black]{hyperref}
\allowdisplaybreaks[2]
\newcommand{\bzeta}{\boldsymbol{\zeta}}
\newcommand{\betta}{\boldsymbol{\eta}}

\newtheorem{theorem}{Theorem}[section]
\newtheorem{proposition}{Proposition}[section]

\theoremstyle{remark}
\newtheorem{remark}{Remark}[section]

\author{Kirill Bakhtin\thanks{Institute for Applied Mathematics, Far Eastern Branch, Russian Academy of Sciences (IAM FEB RAS), 7~Radio Street, Vladivostok 690041, Russia.}\textsuperscript{,}\thanks{Far Eastern Center of Mathematical Research, Far Eastern Federal University, 10~Ajax Bay, Russky Island, Vladivostok 690922, Russia.}}
\title{On transformations of Kamp\'e de F\'eriet functions with integral parameter differences}
\date{}

\begin{document}

\maketitle

\begin{abstract}
We obtain analogues of the first Miller--Paris transformation for three classes of Kamp\'e de F\'eriet functions with unit parameter differences associated with the Appell function $F_1$. We also derive representations of the Appell function $F_1$ in terms of Kamp\'e de F\'eriet functions and reduction formulas expressing certain Kamp\'e de F\'eriet functions with integral parameter differences in terms of univariate generalized hypergeometric functions with integral parameter differences.
\end{abstract}

\noindent\textbf{Keywords.} hypergeometric function; Miller--Paris transformations; integral parameter differences; Appell function; Kamp\'e de F\'eriet function.

\section*{Introduction}

We use the standard notation $\mathbb{C}$, $\mathbb{Z}$, and $\mathbb{N}$ for the sets of complex numbers, integers, and positive integers, respectively, and put $\mathbb{N}_0=\mathbb{N}\cup\{0\}$. Scalars are denoted by ordinary letters and vectors by bold symbols, $\mathbf{a}=(a_1,\dots,a_p)$, $\mathbf{b}=(b_1,\dots,b_q)$. The Pochhammer symbol is defined by $(a)_n=a(a+1)\dots(a+n-1)$, $n\in\mathbb{N}$, $(a)_0=1$, and the falling factorial by $[a]_n=a(a-1)\dots(a-n+1)$, $[a]_0=1$. We use the abbreviations $(\mathbf{a})_n=(a_1)_n\cdots(a_p)_n$ and $\mathbf{a}+\mu=(a_1+\mu,\dots,a_p+\mu)$. Throughout the paper, $\mathbf f=(f_1,\dots,f_p)\in\mathbb C^p$. The elementary symmetric polynomials in $f_1,\dots,f_p$ are denoted by $\sigma_k(\mathbf f)$,
\begin{equation}\label{sigma}
\sigma_k(\mathbf f)=\sum_{1\le i_1<\cdots<i_k\le p}f_{i_1}\cdots f_{i_k},
\end{equation}
where $\sigma_0(\mathbf f)=1$ and $\sigma_p(\mathbf f)=\prod_{j=1}^p f_j=(\mathbf f)_1$.

The generalized hypergeometric function is defined by the series
\begin{equation}\label{eq:hypergeometic}
{}_{p}F_{q}\left(\begin{matrix}\mathbf a\\ \mathbf b\end{matrix}\Big|z\right)
=\sum_{n=0}^{\infty}\frac{(\mathbf a)_n}{(\mathbf b)_n\,n!}z^n,
\end{equation}
where $\mathbf a\in\mathbb C^p$, $\mathbf b\in\mathbb C^q$, and $b_j\notin-\mathbb N_0$ for $j=1,\dots,q$. For further information on generalized hypergeometric functions, see, for example, \cite{AAR,BealsWong,LukeBook}.

Generalized hypergeometric functions containing parameter pairs $\begin{bmatrix}f+m\\f\end{bmatrix}$, $m\in\mathbb N$, are called hypergeometric functions with integral parameter differences (IPD). The study of these functions goes back to the work of B.~Minton and P.~W.~Karlsson. Minton obtained a summation formula for a terminating generalized hypergeometric series at unit argument when one of the upper parameters is $-k$, $k\in\mathbb N$ \cite{Minton1970}. Karlsson generalized this formula by replacing $-k$ with an arbitrary complex parameter $a$ under an appropriate convergence condition and, in the same paper, obtained a reduction formula for hypergeometric functions with positive integral parameter differences \cite{Karlsson1971}. Simpler proofs of this reduction formula were given by Srivastava \cite{Srivastava1973}. For negative integral parameter differences, Karlsson represented the generalized hypergeometric function as a linear combination of hypergeometric functions of lower order \cite{Karlsson1974}; further generalizations were considered by Panda \cite{Panda1976}. Gasper obtained $q$-analogues of the Minton--Karlsson formulas \cite{Gasper1981}, while Chu derived corresponding formulas for bilateral hypergeometric series \cite{Chu1994,Chu1995}. Letessier, Valent, and Wimp studied differential equations satisfied by generalized hypergeometric functions with unit positive parameter differences \cite{LetessierValentWimp1994}. Thomae transformations for hypergeometric functions with positive integral parameter differences were obtained in \cite{KRP2014}; other summation and transformation formulas were considered, in particular, in \cite{KarpPrilepkina2018,my,my2}.

A parameter pair with an arbitrary positive integral difference can be represented by pairs with unit differences, since
\begin{equation}\label{dobavka}
\frac{(f+m)_n}{(f)_n}
=\frac{(f+1)_n(f+2)_n\cdots(f+m)_n}{(f)_n(f+1)_n\cdots(f+m-1)_n}.
\end{equation}
We therefore consider unit parameter differences below. Hypergeometric functions with integral parameter differences can also be interpreted in terms of polynomial perturbations. This interpretation of the Miller--Paris transformations is systematically developed in \cite{Karp2026}.

Miller and Paris obtained two extensions of the Euler--Pfaff transformations (see, for example, \cite{AAR}) to generalized hypergeometric functions with additional parameter pairs \cite{MPEuler2011,MP}. The first transformation, whose analogues are considered in this paper, takes the following form in the case of unit parameter differences:
\begin{equation}\label{MP1}
{}_{p+2}F_{p+1}\left(\begin{matrix}a,b,\mathbf f+\mathbf 1\\c,\mathbf f\end{matrix}\Big|x\right)
=(1-x)^{-a}
{}_{p+2}F_{p+1}\left(\begin{matrix}a,\lambda,\bzeta+\mathbf 1\\c,\bzeta\end{matrix}\Big|\frac{x}{x-1}\right),
\end{equation}
where $\lambda=c-b-p$, $\mathbf 1=(1,\dots,1)\in\mathbb N^p$, and $\bzeta=(\zeta_1,\dots,\zeta_p)$ is the vector of roots of the polynomial
\begin{equation}\label{MP1Pol}
M_p(t)=\sum_{k=0}^{p}\sigma_{p-k}(\mathbf f)
\sum_{l=0}^{k}S(k,l)(b)_l(t)_l(\lambda-t)_{p-l}.
\end{equation}
Here $S(k,l)$ are the Stirling numbers of the second kind,
\[
x^k=\sum_{l=0}^{k}S(k,l)[x]_l.
\]
Other forms of the Miller--Paris transformations and their characteristic polynomials can be found in \cite{MP,KarpPrilepkina2018}.

Throughout the paper, whenever a lower parameter in a transformed IPD pair is a negative integer, the corresponding ratio is understood in the polynomial sense
\[
\frac{(\xi+1)_r}{(\xi)_r}=\frac{\xi+r}{\xi},
\qquad \xi\ne0,\quad r\in\mathbb N_0.
\]
For several IPD pairs, we take the product of these polynomial factors. Thus, the upper and lower parameters in each pair remain linked by their unit difference; the series is not separately truncated at an upper parameter that is a nonpositive integer.

The transformations considered below are based on the Appell function $F_1$,
\begin{equation}\label{eq:F1}
F_1(a;b_1,b_2;c;x,y)
=\sum_{m=0}^{\infty}\sum_{n=0}^{\infty}
\frac{(a)_{m+n}(b_1)_m(b_2)_n}{(c)_{m+n}\,m!\,n!}x^m y^n,
\qquad |x|<1,\quad |y|<1,
\end{equation}
see \cite{AppellKF,Sh}. The function $F_1$ is symmetric under interchange of the pairs $(b_1,x)$ and $(b_2,y)$. In particular,
\[
F_1(a;b_1,0;c;x,y)
={}_2F_1\left(\begin{matrix}a,b_1\\c\end{matrix}\Big|x\right),
\qquad
F_1(a;0,b_2;c;x,y)
={}_2F_1\left(\begin{matrix}a,b_2\\c\end{matrix}\Big|y\right).
\]

The following analogues of the Euler--Pfaff transformations are known for $F_1$:
\begin{equation}\label{Sh24}
F_1(a;b_1,b_2;c;x,y)
=(1-x)^{-b_1}(1-y)^{-b_2}
F_1\left(c-a;b_1,b_2;c;\frac{x}{x-1},\frac{y}{y-1}\right),
\end{equation}
\begin{equation}\label{Sh25}
F_1(a;b_1,b_2;c;x,y)
=(1-x)^{-a}
F_1\left(a;c-b_1-b_2,b_2;c;\frac{x}{x-1},\frac{y-x}{1-x}\right),
\end{equation}
\begin{equation}\label{Sh27}
F_1(a;b_1,b_2;c;x,y)
=(1-y)^{-a}
F_1\left(a;b_1,c-b_1-b_2;c;\frac{x-y}{1-y},\frac{y}{y-1}\right),
\end{equation}
\begin{equation}\label{Sh28}
F_1(a;b_1,b_2;c;x,y)
=(1-x)^{c-a-b_1}(1-y)^{-b_2}
F_1\left(c-a;c-b_1-b_2,b_2;c;x,\frac{x-y}{1-y}\right),
\end{equation}
\begin{equation}\label{Sh29}
F_1(a;b_1,b_2;c;x,y)
=(1-x)^{-b_1}(1-y)^{c-a-b_2}
F_1\left(c-a;b_1,c-b_1-b_2;c;\frac{y-x}{1-x},y\right).
\end{equation}
For $b_2=0$, formulas \eqref{Sh24} and \eqref{Sh25} yield the Pfaff transformations, while \eqref{Sh28} yields Euler's transformation. Similarly, for $b_1=0$, the Pfaff transformations follow from \eqref{Sh24} and \eqref{Sh27}, and Euler's transformation from \eqref{Sh29}.

Appell functions continue to be studied in contemporary research. Alongside transformation formulas, numerical summation and error estimation are also considered; for example, the recent paper \cite{Bezrodnykh} provides estimates for the remainder term in the summation of the Appell series $F_2$.

Adding IPD pairs to the parameter groups associated with $m+n$, $m$, or $n$ gives particular cases of the Kamp\'e de F\'eriet functions,
\begin{equation}\label{eq:KDF}
F^{\,p:k;l}_{\,q:r;s}\!\left(
\begin{array}{c}
\mathbf a:\mathbf b;\mathbf c\\
\mathbf d:\mathbf e;\mathbf g
\end{array}
\ ;x,y\right)
=\sum_{m=0}^{\infty}\sum_{n=0}^{\infty}
\frac{\prod_{i=1}^{p}(a_i)_{m+n}\prod_{j=1}^{k}(b_j)_m\prod_{t=1}^{l}(c_t)_n}
{\prod_{u=1}^{q}(d_u)_{m+n}\prod_{v=1}^{r}(e_v)_m\prod_{w=1}^{s}(g_w)_n}
\frac{x^m}{m!}\frac{y^n}{n!}.
\end{equation}
For information on Kamp\'e de F\'eriet functions, see \cite{AppellKF,Exton,SrivastavaKarlsson}. Reduction formulas for Kamp\'e de F\'eriet functions and their particular cases were considered, in particular, in \cite{Karlsson1984,CvijovicMiller2010,LiuWang2014,MillerParis2012}.

In this paper, we obtain analogues of the first Miller--Paris transformation for three classes of Kamp\'e de F\'eriet functions with unit parameter differences in the parameter groups associated with $m+n$, $m$, and $n$. We also derive representations of the Appell function $F_1$ in terms of Kamp\'e de F\'eriet functions and reduction formulas expressing certain Kamp\'e de F\'eriet functions with integral parameter differences in terms of univariate generalized hypergeometric functions.

All identities involving hypergeometric functions and double hypergeometric series are first considered in a nonempty domain where all the series involved converge absolutely. Outside this domain, they are understood by analytic continuation.

\section{Analogues of the first Miller--Paris transformation for Kamp\'e de F\'eriet functions}

In this section, IPD pairs are added in turn to the parameter groups associated with $m+n$, $m$, and $n$. In each case, the corresponding IPD factor is a polynomial in the relevant index. Expanding this polynomial in falling factorials expresses the Kamp\'e de F\'eriet function as a finite linear combination of $F_1$ functions with shifted parameters, to which we apply transformations \eqref{Sh24}, \eqref{Sh25}, or \eqref{Sh27}.

\begin{theorem}\label{thm:mn-transform}
Let $p\in\mathbb N$, $\mathbf f=(f_1,\dots,f_p)\in\mathbb C^p$, $a,b_1,b_2,c,x,y\in\mathbb C$, $c,f_1,\dots,f_p\notin-\mathbb N_0$, $a\ne f_j$ for $j=1,\dots,p$, $\lambda=c-a-p$, and $(\lambda)_p\ne0$. Then
\begin{multline}\label{KFTH1}
F_{p+1:0;0}^{p+1:1;1}\!\left(
\begin{array}{c}
a,\mathbf f+\mathbf1:b_1;b_2\\
c,\mathbf f:-;-
\end{array};x,y\right)\\
=(1-x)^{-b_1}(1-y)^{-b_2}
F_{p+1:0;0}^{p+1:1;1}\!\left(
\begin{array}{c}
\lambda,\bzeta+\mathbf1:b_1;b_2\\
c,\bzeta:-;-
\end{array};\frac{x}{x-1},\frac{y}{y-1}\right),
\end{multline}
where $\bzeta=(\zeta_1,\dots,\zeta_p)$ is the vector of roots of the polynomial
\begin{equation}\label{KFTH1Pol}
Q_p(t)=\sum_{k=0}^{p}\sigma_{p-k}(\mathbf f)\sum_{l=0}^{k}S(k,l)(a)_l(\lambda-t)_{p-l}(t)_l.
\end{equation}
\end{theorem}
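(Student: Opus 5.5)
The plan is to mimic the standard proof of the Miller--Paris transformation \eqref{MP1}: expand the IPD factor in falling factorials, reduce to a finite combination of $F_1$ functions, apply \eqref{Sh24} to each, and resum. Writing $N=m+n$, we have
\[
\frac{(\mathbf f+\mathbf 1)_N}{(\mathbf f)_N}=\prod_{j=1}^p\frac{f_j+N}{f_j}=\frac{1}{\sigma_p(\mathbf f)}\sum_{k=0}^p\sigma_{p-k}(\mathbf f)N^k .
\]
Expanding $N^k=\sum_l S(k,l)[N]_l$ and using $(a)_N[N]_l=(a)_l(a+l)_{N-l}\,N!/(N-l)!$, we will rewrite the left-hand side of \eqref{KFTH1} as
\[
\frac{1}{\sigma_p(\mathbf f)}\sum_{l=0}^p\beta_l\,\mathcal D_l\,F_1(a;b_1,b_2;c;x,y),
\qquad \beta_l=\sum_{k=l}^p\sigma_{p-k}(\mathbf f)S(k,l),
\]
where $\mathcal D_l=(x\partial_x+y\partial_y)^{\underline l}$ is the falling-factorial power of the Euler operator in total degree. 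Equivalently, $[N]_l\,x^my^n$ is handled by the multinomial identity: $\mathcal D_l$ acting on $F_1$ produces $\frac{(a)_l}{(c)_l}\sum_{i+j=l}\binom{l}{i}(b_1)_i(b_2)_jx^iy^j\,F_1(a+l;b_1+i,b_2+j;c+l;x,y)$. This gives a finite combination of shifted $F_1$'s.

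Next we apply \eqref{Sh24} to each term. This produces factors $(1-x)^{-b_1-i}(1-y)^{-b_2-j}$ together with $F_1(c-a;b_1+i,b_2+j;c+l;X,Y)$, where $X=x/(x-1)$ and $Y=y/(y-1)$. Since $x^i(1-x)^{-i}=(-X)^i$, the monomial prefactors become $(-1)^lX^iY^j$, and we can recognize the sum over $i+j=l$ as $(-1)^l\frac{(a)_l}{(c-a)_l}$ times $\mathcal D_l$ applied to $F_1(c-a;b_1,b_2;c;X,Y)$, up to the needed parameter shifts. This is the same mechanism as in the one-variable case.

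Collecting everything, the left side equals $(1-x)^{-b_1}(1-y)^{-b_2}\sum_{N}\frac{(c-a)_N}{(c)_N}P(N)\,[\text{$F_1$ coefficient at } X^mY^n]$, where $P(N)$ is a polynomial of degree $p$ in $N$. Computing $P$ explicitly: $P(N)=\frac1{\sigma_p}\sum_l\beta_l(-1)^l(a)_l[N]_l/(c-a)_l$. The degree-$p$ term is rewritten using $(c-a)_N/(c-a)_l$-type ratios. To match the target form $\frac{(\lambda)_N(\bzeta+1)_N}{(c-a)_N(\bzeta)_N}$ with $\lambda=c-a-p$, note that $(c-a)_N=(\lambda)_N(\lambda+N)_p/(\lambda)_p$. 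We must therefore show that
\[
(\lambda+N)_p\,P(N)\cdot\frac{1}{(\lambda)_p}\cdot\frac{\text{const}}{\cdots}
\]
has the correct form, i.e.\ that the polynomial $R(N)=\sum_l\beta_l(-1)^l(a)_l[N]_l(\lambda+l+\cdot)\cdots$ factors as $\prod_j(\zeta_j+N)$. Concretely, $(-1)^l[N]_l(\lambda+N)_p/(c-a)_l$ with $(c-a)_l=(\lambda+p)_l$ should simplify, via $(\lambda+N)_p$ and the substitution $t=-N$, to $(t)_l(\lambda-t)_{p-l}/(\lambda)_p$ up to normalization. Indeed, $[N]_l(-1)^l=(-N)_l=(t)_l$, and combining $(\lambda+N)_p/(\lambda+p)_l$ to get $(\lambda-t)_{p-l}$ is the delicate identity.

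This last step is the main obstacle. The factor-matching between $(\lambda+N)_p$, $(c-a)_l$ and $(\lambda-t)_{p-l}$ does not hold termwise, so rather than forcing it we will redo the expansion on the right side. That is, we start instead from the right-hand side written with an unknown degree-$p$ polynomial, expand in the basis $(\lambda+N+\cdot)$, and compare. This is exactly what yields $Q_p(t)$ in \eqref{KFTH1Pol}, whose roots in $t=-N$ give $-\zeta_j$.

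Finally, the leading coefficient normalization is checked using $Q_p(0)=\sigma_p(\mathbf f)(\lambda)_p$, which equals $(\bzeta)_1(\lambda)_p$ times the leading sign. This uses $(\lambda)_p\ne0$. The hypothesis $a\ne f_j$ guarantees that no $\zeta_j$ degenerates, so that the polynomial convention for IPD pairs applies.
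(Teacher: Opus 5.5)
Your first two steps coincide with the paper's. You expand the IPD factor via $\sigma_{p-k}(\mathbf f)$, $S(k,l)$ and $[m+n]_l=\sum_i\binom li[m]_i[n]_{l-i}$ into a finite combination of $F_1(a+l;b_1+i,b_2+l-i;c+l;x,y)$, and then apply \eqref{Sh24} termwise.

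The gap is in the resummation. After \eqref{Sh24}, the first parameter of each transformed $F_1$ is $(c+l)-(a+l)=c-a$, \emph{not} $c-a+l$. So the sum over $i+j=l$ is not $(-1)^l\frac{(a)_l}{(c-a)_l}\mathcal D_lF_1(c-a;b_1,b_2;c;X,Y)$. That expression would carry $(c-a+l)_{N-l}=(c-a)_N/(c-a)_l$, whereas the true coefficient of $X^mY^n$ carries $(c-a)_{N-l}$. Hence your $P(N)$ is wrong. The mismatch you observed, $(\lambda+N)_p/(\lambda+p)_l=(\lambda+N)_{p-l}\,(\lambda+N+p-l)_l/(\lambda+p)_l\neq(\lambda+N)_{p-l}$, is a symptom of this slip rather than a ``delicate identity''. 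Your proposed remedy (start from the right-hand side with an unknown polynomial and compare) is not an argument, because there is no correctly computed left-hand coefficient to compare it with. As it stands, \eqref{KFTH1} is not proved.

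With the correct factor, everything is exact termwise; this is the idea you are missing.
\begin{itemize}
\item Put $m=u+i$, $n=v+l-i$, and write $1/(m-i)!=(-1)^i(-m)_i/m!$ and $1/(n-l+i)!=(-1)^{l-i}(-n)_{l-i}/n!$. These signs cancel those coming from $x^i(1-x)^{-i}y^{l-i}(1-y)^{-l+i}=(-X)^i(-Y)^{l-i}$.
\item Factor out $(\lambda)_N$ rather than $(c-a)_N$, using
\[
(c-a)_{N-l}=(\lambda+p)_{N-l}=\frac{(\lambda)_N(\lambda+N)_{p-l}}{(\lambda)_p},\qquad\lambda=c-a-p.
\]
This is exactly why $\lambda$ is chosen as $c-a-p$.
\item Vandermonde's convolution $\sum_i\binom li(-m)_i(-n)_{l-i}=(-N)_l$ then turns the polynomial factor into
\[
\frac{1}{(\lambda)_p(\mathbf f)_1}\sum_k\sigma_{p-k}(\mathbf f)\sum_lS(k,l)(a)_l(\lambda+N)_{p-l}(-N)_l=\frac{Q_p(-N)}{Q_p(0)}=\prod_j\frac{N+\zeta_j}{\zeta_j}.
\]
There is no sign issue, since only this ratio enters.
\end{itemize}

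Your remark on $a\ne f_j$ is sound. The $t^p$-coefficient of $Q_p$ is $(-1)^p\prod_j(f_j-a)$, so $Q_p$ has exactly $p$ roots. They are all nonzero because $Q_p(0)=(\lambda)_p(\mathbf f)_1\ne0$.
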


\begin{proof}
Expanding the left-hand side of \eqref{KFTH1} by definition, we obtain
\begin{equation}\label{eq:mn-start}
F_{p+1:0;0}^{p+1:1;1}\!\left(
\begin{array}{c}
a,\mathbf f+\mathbf1:b_1;b_2\\
c,\mathbf f:-;-
\end{array};x,y\right)=\sum_{m=0}^{\infty}\sum_{n=0}^{\infty}
\frac{(a)_{m+n}(\mathbf f+\mathbf1)_{m+n}(b_1)_m(b_2)_n}
{(c)_{m+n}(\mathbf f)_{m+n}}\frac{x^m}{m!}\frac{y^n}{n!}.
\end{equation}
For the IPD factor, we have
\[
\frac{(\mathbf f+\mathbf1)_{m+n}}{(\mathbf f)_{m+n}}
=\frac{1}{(\mathbf f)_1}\prod_{j=1}^{p}(f_j+m+n)
=\frac{1}{(\mathbf f)_1}\sum_{k=0}^{p}\sigma_{p-k}(\mathbf f)(m+n)^k.
\]
Using the expansion of a power in falling factorials and the formula for the falling factorial of a sum,
\[
(m+n)^k=\sum_{l=0}^{k}S(k,l)[m+n]_l,
\qquad
[m+n]_l=\sum_{i=0}^{l}\binom li[m]_i[n]_{l-i},
\]
we obtain
\begin{equation}\label{frac}
\frac{(\mathbf f+\mathbf1)_{m+n}}{(\mathbf f)_{m+n}}
=\frac{1}{(\mathbf f)_1}
\sum_{k=0}^{p}\sigma_{p-k}(\mathbf f)
\sum_{l=0}^{k}S(k,l)
\sum_{i=0}^{l}\binom li[m]_i[n]_{l-i}.
\end{equation}
Substitute \eqref{frac} into \eqref{eq:mn-start}. Since $[m]_i/m!=1/(m-i)!$ and $[n]_{l-i}/n!=1/(n-l+i)!$, the inner sums start at $m=i$ and $n=l-i$. Making the substitutions $m\mapsto m+i$, $n\mapsto n+l-i$ and using the identity $(u)_{r+s}=(u)_r(u+r)_s$, we obtain
\begin{multline}\label{KDtoAppel}
F_{p+1:0;0}^{p+1:1;1}\!\left(
\begin{array}{c}
a,\mathbf f+\mathbf1:b_1;b_2\\
c,\mathbf f:-;-
\end{array};x,y\right)\\
=\frac{1}{(\mathbf f)_1}\sum_{k=0}^{p}\sigma_{p-k}(\mathbf f)
\sum_{l=0}^{k}S(k,l)\sum_{i=0}^{l}\binom li
\frac{(a)_l(b_1)_i(b_2)_{l-i}}{(c)_l}x^iy^{l-i}\\
\times F_1(a+l;b_1+i,b_2+l-i;c+l;x,y).
\end{multline}

We now apply transformation \eqref{Sh24} to each $F_1$ function:
\begin{multline}\label{eq:mn-Sh24}
F_1(a+l;b_1+i,b_2+l-i;c+l;x,y)\\
=(1-x)^{-b_1-i}(1-y)^{-b_2-l+i}
F_1\!\left(c-a;b_1+i,b_2+l-i;c+l;
\frac{x}{x-1},\frac{y}{y-1}\right).
\end{multline}
Expand the last function into a double series with summation indices $u,v\ge0$ and, in each term of the finite sum, set $m=u+i$ and $n=v+l-i$. We use the identities
\[
(m-i)!=(-1)^i\frac{m!}{(-m)_i},\qquad
(n-l+i)!=(-1)^{l-i}\frac{n!}{(-n)_{l-i}},
\]
and, with $\lambda=c-a-p$,
\begin{equation}\label{eq:mn-lambda-shift}
(c-a)_{m+n-l}
=\frac{(\lambda)_{m+n}(\lambda+m+n)_{p-l}}{(\lambda)_p}.
\end{equation}
After cancellation of the factors depending on $i$ and $l$, formula \eqref{KDtoAppel} takes the form
\begin{multline}\label{KFTH111}
F_{p+1:0;0}^{p+1:1;1}\!\left(
\begin{array}{c}
a,\mathbf f+\mathbf1:b_1;b_2\\
c,\mathbf f:-;-
\end{array};x,y\right)\\
=(1-x)^{-b_1}(1-y)^{-b_2}
\sum_{m=0}^{\infty}\sum_{n=0}^{\infty}
\frac{(\lambda)_{m+n}(b_1)_m(b_2)_n}{(c)_{m+n}m!n!}
\left(\frac{x}{x-1}\right)^m
\left(\frac{y}{y-1}\right)^n\\
\times\frac{1}{(\lambda)_p(\mathbf f)_1}
\sum_{k=0}^{p}\sigma_{p-k}(\mathbf f)
\sum_{l=0}^{k}S(k,l)(a)_l(\lambda+m+n)_{p-l}
\sum_{i=0}^{l}\binom li(-m)_i(-n)_{l-i}.
\end{multline}
By Vandermonde's convolution for the Pochhammer symbols,
\[
\sum_{i=0}^{l}\binom li(-m)_i(-n)_{l-i}=(-m-n)_l.
\]
Put $r=m+n$. After applying Vandermonde's convolution,
the polynomial factor in \eqref{KFTH111} can be written as

\begin{multline}
\frac{1}{(\lambda)_p(\mathbf f)_1}
\sum_{k=0}^{p}\sigma_{p-k}(\mathbf f)
\sum_{l=0}^{k}S(k,l)(a)_l
(\lambda+m+n)_{p-l}(-m-n)_l
=\\
\frac{1}{(\lambda)_p(\mathbf f)_1}
\sum_{k=0}^{p}\sigma_{p-k}(\mathbf f)
\sum_{l=0}^{k}S(k,l)(a)_l
(\lambda+r)_{p-l}(-r)_l
=
\frac{Q_p(-r)}{(\lambda)_p(\mathbf f)_1}.
\end{multline}
The identification with $Q_p(-r)$ follows directly from
\eqref{KFTH1Pol} by setting $t=-r$.
At $t=0$, only the term with $k=l=0$ remains in \eqref{KFTH1Pol}, and hence
\[
Q_p(0)=(\lambda)_p(\mathbf f)_1.
\]
If $\zeta_1,\dots,\zeta_p$ are the roots of $Q_p$, then
\begin{equation}\label{eq:Q-factor-mn}
\frac{Q_p(-r)}{Q_p(0)}
=\prod_{j=1}^{p}\frac{r+\zeta_j}{\zeta_j}
=\frac{(\bzeta+\mathbf1)_r}{(\bzeta)_r},
\qquad r\in\mathbb N_0.
\end{equation}
Substituting \eqref{eq:Q-factor-mn} with $r=m+n$ into \eqref{KFTH111}, we obtain \eqref{KFTH1}.
\end{proof}

For $b_1=0$ or $b_2=0$, formula \eqref{KFTH1} reduces to the first Miller--Paris transformation \eqref{MP1}.

\begin{remark}
In the univariate case, the upper parameters of a hypergeometric function enter the series coefficients symmetrically. For $F_1$, this symmetry is lost: the parameter $a$ enters through $(a)_{m+n}$, whereas $b_1$ and $b_2$ enter through $(b_1)_m$ and $(b_2)_n$, respectively. Thus, the argument leading to the second Miller--Paris transformation does not carry over directly to this setting.
\end{remark}

The next theorem concerns IPD pairs in the parameter groups associated separately with $m$ and $n$.

\begin{theorem}\label{thm:m-n-transform}
Let $p\in\mathbb N$, $\mathbf f=(f_1,\dots,f_p)\in\mathbb C^p$, $a,b_1,b_2,c,x,y\in\mathbb C$, $c,f_1,\dots,f_p\notin-\mathbb N_0$, $\lambda_1=c-b_1-b_2-p$, and $(\lambda_1)_p\ne0$.

If $b_1\ne f_j$ for $j=1,\dots,p$, then
\begin{equation}\label{eq:36}
F^{\,1:p+1;1}_{\,1:p;0}\!\left(
\begin{array}{c}
a:b_1,\mathbf f+\mathbf1;b_2\\
c:\mathbf f;-
\end{array};x,y\right)
=(1-x)^{-a}
F^{\,1:p+1;1}_{\,1:p;0}\!\left(
\begin{array}{c}
a:\lambda_1,\widetilde{\betta}+\mathbf1;b_2\\
c:\widetilde{\betta};-
\end{array};\frac{x}{x-1},\frac{y-x}{1-x}\right),
\end{equation}
where $\widetilde{\betta}=(\widetilde\eta_1,\dots,\widetilde\eta_p)$ is the vector of roots of the polynomial
\begin{equation}\label{eq:Qtilde}
\widetilde Q_p(t)=\sum_{k=0}^{p}\sigma_{p-k}(\mathbf f)
\sum_{l=0}^{k}S(k,l)(b_1)_l(\lambda_1-t)_{p-l}(t)_l.
\end{equation}
By symmetry,
if $b_2\ne f_j$ for $j=1,\dots,p$, then
\begin{equation}\label{eq:37}
F^{\,1:1;p+1}_{\,1:0;p}\!\left(
\begin{array}{c}
a:b_1;b_2,\mathbf f+\mathbf1\\
c:-;\mathbf f
\end{array};x,y\right)
=(1-y)^{-a}
F^{\,1:1;p+1}_{\,1:0;p}\!\left(
\begin{array}{c}
a:b_1;\lambda_1,\check{\betta}+\mathbf1\\
c:-;\check{\betta}
\end{array};\frac{x-y}{1-y},\frac{y}{y-1}\right),
\end{equation}
where $\check{\betta}=(\check\eta_1,\dots,\check\eta_p)$ is the vector of roots of
\begin{equation}\label{eq:Qcheck}
\check Q_p(t)=\sum_{k=0}^{p}\sigma_{p-k}(\mathbf f)
\sum_{l=0}^{k}S(k,l)(b_2)_l(\lambda_1-t)_{p-l}(t)_l.
\end{equation}
\end{theorem}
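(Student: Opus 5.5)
We prove \eqref{eq:36} by mirroring the proof of Theorem~\ref{thm:mn-transform}, with the IPD factor now depending only on $m$ and with transformation \eqref{Sh25} in place of \eqref{Sh24}. Formula \eqref{eq:37} then follows from \eqref{eq:36}. Interchange $(b_1,x)$ with $(b_2,y)$, using the symmetry of $F_1$ and of the double series \eqref{eq:KDF} under swapping the $m$- and $n$-groups. This swap turns \eqref{Sh25} into \eqref{Sh27} and $\widetilde Q_p$ into $\check Q_p$.

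\textbf{Step 1: reduction to $F_1$.} Write
\[
\frac{(\mathbf f+\mathbf1)_m}{(\mathbf f)_m}=\frac{1}{(\mathbf f)_1}\sum_{k=0}^p\sigma_{p-k}(\mathbf f)\sum_{l=0}^kS(k,l)[m]_l .
\]
Shift $m\mapsto m+l$ and use $(a)_{m+l+n}=(a)_l(a+l)_{m+n}$, together with the analogous identities for $c$ and $b_1$. This expresses the left side of \eqref{eq:36} as
\[
\frac{1}{(\mathbf f)_1}\sum_{k}\sigma_{p-k}(\mathbf f)\sum_{l}S(k,l)\frac{(a)_l(b_1)_l}{(c)_l}\,x^l\,F_1(a+l;b_1+l,b_2;c+l;x,y).
\]
No Vandermonde splitting is needed here, because only $m$ is involved.

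\textbf{Step 2: apply \eqref{Sh25}.} Each term becomes
\[
(1-x)^{-a-l}F_1\bigl(a+l;\,c-b_1-b_2,\,b_2;\,c+l;\,X,Y\bigr),\qquad X=\frac{x}{x-1},\quad Y=\frac{y-x}{1-x}.
\]
Note that $c+l-(b_1+l)-b_2=c-b_1-b_2$. Using $x^l(1-x)^{-l}=(-1)^lX^l$, expand this $F_1$ in indices $u,v$ and set $m=u+l$, $n=v$. Then $(a+l)_{u+v}(a)_l=(a)_{m+n}$ and $(c+l)_{u+v}(c)_l=(c)_{m+n}$, so the $a$- and $c$-factors recombine exactly into the form of the right-hand side. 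We also use $1/(m-l)!=(-1)^l(-m)_l/m!$, whose sign $(-1)^l$ cancels the sign from $X^l$, and the analogue of \eqref{eq:mn-lambda-shift}:
\[
(c-b_1-b_2)_{m-l}=\frac{(\lambda_1)_m(\lambda_1+m)_{p-l}}{(\lambda_1)_p}.
\]
The terms with $m<l$ vanish because $(-m)_l=0$, so the summation over $m,n$ may be extended to all of $\mathbb N_0^2$. The result is
\[
(1-x)^{-a}\sum_{m,n}\frac{(a)_{m+n}(\lambda_1)_m(b_2)_n}{(c)_{m+n}m!\,n!}X^mY^n\cdot\frac{\widetilde Q_p(-m)}{(\lambda_1)_p(\mathbf f)_1}.
\]

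\textbf{Step 3: identify the polynomial factor.} As before, $\widetilde Q_p(0)=(\lambda_1)_p(\mathbf f)_1$, so
\[
\frac{\widetilde Q_p(-m)}{\widetilde Q_p(0)}=\frac{(\widetilde{\betta}+\mathbf1)_m}{(\widetilde{\betta})_m},
\]
with the polynomial convention for negative-integer roots. This yields \eqref{eq:36}.

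The hypothesis $b_1\ne f_j$ guarantees that $\widetilde Q_p$ has exact degree $p$, so that it has $p$ roots. Its leading coefficient is a multiple of $\prod_j(b_1-f_j)$, in parallel with the role of $a\ne f_j$ in Theorem~\ref{thm:mn-transform}; I would verify this only briefly. The main obstacle is the bookkeeping in Step 2. One must check that the shifted parameters $a+l$ and $c+l$ of the transformed $F_1$ recombine with $(a)_l/(c)_l$ into $(a)_{m+n}/(c)_{m+n}$ in the new variables. This works because \eqref{Sh25} keeps $a$ and $c$ and because the reindexing shifts only $m$. One must also track the signs and the $(1-x)^{-l}$ factors consistently, and check convergence in a nonempty common domain, beyond which the identity extends by analytic continuation.
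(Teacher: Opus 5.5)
Your proposal is correct and follows the paper's proof essentially step for step. You expand the $m$-dependent IPD factor in falling factorials, reduce to $F_1(a+l;b_1+l,b_2;c+l;x,y)$, apply \eqref{Sh25}, reindex with $m=u+l$ using the same $\lambda_1$-shift identity, and identify the factor $\widetilde Q_p(-m)/\widetilde Q_p(0)$. The only differences are cosmetic: you deduce \eqref{eq:37} from \eqref{eq:36} by the $m\leftrightarrow n$ symmetry rather than repeating the computation with \eqref{Sh27}, and you check that the leading coefficient of $\widetilde Q_p$ is $\prod_j(b_1-f_j)$ (it is exactly this), a point the paper leaves implicit.
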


\begin{proof}
We first prove \eqref{eq:36}. Expanding the Kamp\'e de F\'eriet function into a double series, we obtain
\begin{equation}\label{eq:m-start}
\sum_{m=0}^{\infty}\sum_{n=0}^{\infty}
\frac{(a)_{m+n}(b_1)_m(b_2)_n}{(c)_{m+n}}
\frac{(\mathbf f+\mathbf1)_m}{(\mathbf f)_m}
\frac{x^m}{m!}\frac{y^n}{n!}.
\end{equation}
Here the IPD factor depends only on $m$ and has the expansion
\begin{equation}\label{eq:m-ipd-expand}
\frac{(\mathbf f+\mathbf1)_m}{(\mathbf f)_m}
=\frac{1}{(\mathbf f)_1}\sum_{k=0}^{p}\sigma_{p-k}(\mathbf f)m^k
=\frac{1}{(\mathbf f)_1}\sum_{k=0}^{p}\sigma_{p-k}(\mathbf f)
\sum_{l=0}^{k}S(k,l)[m]_l.
\end{equation}
Substituting \eqref{eq:m-ipd-expand} into \eqref{eq:m-start}, using $[m]_l/m!=1/(m-l)!$, and shifting $m\mapsto m+l$, we obtain
\begin{multline}\label{eq:KdF-m-F1}
F^{\,1:p+1;1}_{\,1:p;0}\!\left(
\begin{array}{c}
a:b_1,\mathbf f+\mathbf1;b_2\\
c:\mathbf f;-
\end{array};x,y\right)\\
=\frac{1}{(\mathbf f)_1}\sum_{k=0}^{p}\sigma_{p-k}(\mathbf f)
\sum_{l=0}^{k}S(k,l)
\frac{(a)_l(b_1)_l}{(c)_l}x^l
F_1(a+l;b_1+l,b_2;c+l;x,y).
\end{multline}
Apply \eqref{Sh25} to the function $F_1$:
\begin{multline}\label{eq:m-Sh25}
F_1(a+l;b_1+l,b_2;c+l;x,y)\\
=(1-x)^{-a-l}
F_1\!\left(a+l;c-b_1-b_2,b_2;c+l;
\frac{x}{x-1},\frac{y-x}{1-x}\right).
\end{multline}
Expand the right-hand side of \eqref{eq:m-Sh25} into a double series with summation indices $u,n\ge0$ and substitute it into \eqref{eq:KdF-m-F1}. Setting $m=u+l$, we use
\[
(m-l)!=(-1)^l\frac{m!}{(-m)_l},
\qquad
(c-b_1-b_2)_{m-l}
=\frac{(\lambda_1)_m(\lambda_1+m)_{p-l}}{(\lambda_1)_p},
\]
where $\lambda_1=c-b_1-b_2-p$. After cancellation, we obtain
\begin{multline}\label{eq:m-polynomial-stage}
F^{\,1:p+1;1}_{\,1:p;0}\!\left(
\begin{array}{c}
a:b_1,\mathbf f+\mathbf1;b_2\\
c:\mathbf f;-
\end{array};x,y\right)\\
=(1-x)^{-a}
\sum_{m=0}^{\infty}\sum_{n=0}^{\infty}
\frac{(a)_{m+n}(\lambda_1)_m(b_2)_n}{(c)_{m+n}m!n!}
\left(\frac{x}{x-1}\right)^m
\left(\frac{y-x}{1-x}\right)^n\\
\times\frac{1}{(\lambda_1)_p(\mathbf f)_1}
\sum_{k=0}^{p}\sigma_{p-k}(\mathbf f)
\sum_{l=0}^{k}S(k,l)(b_1)_l(\lambda_1+m)_{p-l}(-m)_l.
\end{multline}
Regarding the inner finite sum in \eqref{eq:m-polynomial-stage} as a polynomial in $m$ and substituting $m=-t$, we obtain
\[
\widetilde Q_p(t)=\sum_{k=0}^{p}\sigma_{p-k}(\mathbf f)
\sum_{l=0}^{k}S(k,l)(b_1)_l(\lambda_1-t)_{p-l}(t)_l,
\]
where $\widetilde Q_p$ is defined by \eqref{eq:Qtilde}. Thus, the last factor in \eqref{eq:m-polynomial-stage} is
\[
\frac{1}{(\lambda_1)_p(\mathbf f)_1}
\sum_{k=0}^{p}\sigma_{p-k}(\mathbf f)
\sum_{l=0}^{k}S(k,l)(b_1)_l(\lambda_1+m)_{p-l}(-m)_l=\frac{\widetilde Q_p(-m)}{(\lambda_1)_p(\mathbf f)_1}.
\]
At $t=0$, we have
\[
\widetilde Q_p(0)=(\lambda_1)_p(\mathbf f)_1.
\]
If $\widetilde\eta_1,\dots,\widetilde\eta_p$ are the roots of $\widetilde Q_p$, then
\[
\frac{\widetilde Q_p(-m)}{\widetilde Q_p(0)}
=\prod_{j=1}^{p}\frac{m+\widetilde\eta_j}{\widetilde\eta_j}
=\frac{(\widetilde{\betta}+\mathbf1)_m}{(\widetilde{\betta})_m}.
\]
Substituting this identity into \eqref{eq:m-polynomial-stage} yields the Kamp\'e de F\'eriet function on the right-hand side of \eqref{eq:36}.

We now prove \eqref{eq:37}. In this case, the IPD factor depends only on $n$:
\[
\frac{(\mathbf f+\mathbf1)_n}{(\mathbf f)_n}
=\frac{1}{(\mathbf f)_1}\sum_{k=0}^{p}\sigma_{p-k}(\mathbf f)
\sum_{l=0}^{k}S(k,l)[n]_l.
\]
Proceeding as above and shifting $n\mapsto n+l$, we obtain
\begin{multline}\label{eq:KdF-n-F1}
F^{\,1:1;p+1}_{\,1:0;p}\!\left(
\begin{array}{c}
a:b_1;b_2,\mathbf f+\mathbf1\\
c:-;\mathbf f
\end{array};x,y\right)\\
=\frac{1}{(\mathbf f)_1}\sum_{k=0}^{p}\sigma_{p-k}(\mathbf f)
\sum_{l=0}^{k}S(k,l)
\frac{(a)_l(b_2)_l}{(c)_l}y^l
F_1(a+l;b_1,b_2+l;c+l;x,y).
\end{multline}
Apply \eqref{Sh27} to $F_1$ in \eqref{eq:KdF-n-F1} and expand the result into a double series with summation indices $m,v\ge0$. Setting $n=v+l$ and using
\[
(n-l)!=(-1)^l\frac{n!}{(-n)_l},
\qquad
(c-b_1-b_2)_{n-l}
=\frac{(\lambda_1)_n(\lambda_1+n)_{p-l}}{(\lambda_1)_p},
\]
we obtain
\begin{multline}\label{eq:n-polynomial-stage}
F^{\,1:1;p+1}_{\,1:0;p}\!\left(
\begin{array}{c}
a:b_1;b_2,\mathbf f+\mathbf1\\
c:-;\mathbf f
\end{array};x,y\right)\\
=(1-y)^{-a}
\sum_{m=0}^{\infty}\sum_{n=0}^{\infty}
\frac{(a)_{m+n}(b_1)_m(\lambda_1)_n}{(c)_{m+n}m!n!}
\left(\frac{x-y}{1-y}\right)^m
\left(\frac{y}{y-1}\right)^n\\
\times\frac{1}{(\lambda_1)_p(\mathbf f)_1}
\sum_{k=0}^{p}\sigma_{p-k}(\mathbf f)
\sum_{l=0}^{k}S(k,l)(b_2)_l(\lambda_1+n)_{p-l}(-n)_l.
\end{multline}
Regarding the inner finite sum in \eqref{eq:n-polynomial-stage} as a polynomial in $n$ and substituting $n=-t$, we obtain
\[
\sum_{k=0}^{p}\sigma_{p-k}(\mathbf f)
\sum_{l=0}^{k}S(k,l)(b_2)_l(\lambda_1-t)_{p-l}(t)_l
=\check Q_p(t),
\]
where $\check Q_p$ is defined by \eqref{eq:Qcheck}. Thus, the last factor in \eqref{eq:n-polynomial-stage} is
\[
\frac{1}{(\lambda_1)_p(\mathbf f)_1}
\sum_{k=0}^{p}\sigma_{p-k}(\mathbf f)
\sum_{l=0}^{k}S(k,l)(b_2)_l(\lambda_1+n)_{p-l}(-n)_l=\frac{\check Q_p(-n)}{(\lambda_1)_p(\mathbf f)_1}.
\]
At $t=0$, we have
\[
\check Q_p(0)=(\lambda_1)_p(\mathbf f)_1.
\]
If $\check\eta_1,\dots,\check\eta_p$ are the roots of $\check Q_p$, then
\[
\frac{\check Q_p(-n)}{\check Q_p(0)}
=\prod_{j=1}^{p}\frac{n+\check\eta_j}{\check\eta_j}
=\frac{(\check{\betta}+\mathbf1)_n}{(\check{\betta})_n}.
\]
Substitution into \eqref{eq:n-polynomial-stage} gives \eqref{eq:37}.
\end{proof}

Setting $b_2=0$ in \eqref{eq:36} or $b_1=0$ in \eqref{eq:37} yields a form of the first Miller--Paris transformation with the corresponding characteristic polynomial.

\section{Reduction formulas for Kamp\'e de F\'eriet functions}

We first give identities relating $F_1$ to Kamp\'e de F\'eriet functions without additional IPD pairs. They follow by representing $F_1$ as a series of Gauss hypergeometric functions and applying the Euler--Pfaff transformations.

\begin{proposition}\label{prop:F1-KdF}
Let $a,b_1,b_2,c,x,y\in\mathbb C$ and $c,c-b_2\notin-\mathbb N_0$. Then the following identities hold:
\begin{equation}\label{eq:F1-KdF-explicit-1st}
F^{2:1;0}_{1:1;0}\!\left(
\begin{array}{c}
a,c-b_2:b_1;-\\
c:c-b_2;-
\end{array};\frac{x}{1-y},\frac{y}{y-1}\right)
=(1-y)^aF_1(a;b_1,b_2;c;x,y)
\end{equation}
and
\begin{equation}\label{eq:F1-KdF-explicit}
F^{1:2;1}_{1:1;0}\!\left(
\begin{array}{c}
c-b_2:a,b_1;c-a\\
c:c-b_2;-
\end{array};x,y\right)
=(1-y)^{a+b_2-c}F_1(a;b_1,b_2;c;x,y).
\end{equation}
\end{proposition}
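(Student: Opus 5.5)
The plan is to expand $F_1$ as a single series of Gauss functions in $y$, apply a Pfaff or Euler transformation to each Gauss function, and then recombine the resulting double series into the stated Kamp\'e de F\'eriet functions. First I split the double series \eqref{eq:F1} in the summation index $n$ using $(a)_{m+n}=(a)_m(a+m)_n$ and $(c)_{m+n}=(c)_m(c+m)_n$. This gives
\[
F_1(a;b_1,b_2;c;x,y)=\sum_{m=0}^{\infty}\frac{(a)_m(b_1)_m}{(c)_m\,m!}x^m\,
{}_2F_1\!\left(\begin{matrix}a+m,b_2\\ c+m\end{matrix}\Big|y\right),
\]
valid for $|x|,|y|$ small, where every series converges absolutely and rearrangements are legitimate. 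Analytic continuation then extends the identities, as stipulated at the end of the Introduction.

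For \eqref{eq:F1-KdF-explicit-1st}, I apply Pfaff's transformation to each Gauss function:
\[
{}_2F_1\!\left(\begin{matrix}a+m,b_2\\ c+m\end{matrix}\Big|y\right)
=(1-y)^{-a-m}\,{}_2F_1\!\left(\begin{matrix}a+m,c-b_2+m\\ c+m\end{matrix}\Big|\frac{y}{y-1}\right).
\]
I expand the right-hand side in an index $n$ and combine the Pochhammer symbols:
\begin{itemize}
\item $(a)_m(a+m)_n=(a)_{m+n}$;
\item $(c)_m(c+m)_n=(c)_{m+n}$;
\item $(c-b_2+m)_n=(c-b_2)_{m+n}/(c-b_2)_m$, which is where the hypothesis $c-b_2\notin-\mathbb N_0$ is used.
\end{itemize}
The factor $(1-y)^{-m}$ merges with $x^m$ into $(x/(1-y))^m$, and the factor $(1-y)^{-a}$ moves to the left-hand side. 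The resulting double series is, term by term, the $F^{2:1;0}_{1:1;0}$ function with arguments $x/(1-y)$ and $y/(y-1)$, which proves the first identity.

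For \eqref{eq:F1-KdF-explicit}, I instead apply Euler's transformation:
\[
{}_2F_1\!\left(\begin{matrix}a+m,b_2\\ c+m\end{matrix}\Big|y\right)
=(1-y)^{c-a-b_2}\,{}_2F_1\!\left(\begin{matrix}c-a,c-b_2+m\\ c+m\end{matrix}\Big|y\right).
\]
The prefactor $(1-y)^{c-a-b_2}$ is now independent of $m$. The same recombination gives the coefficient
\[
\frac{(c-b_2)_{m+n}(a)_m(b_1)_m(c-a)_n}{(c)_{m+n}(c-b_2)_m\,m!\,n!}x^my^n .
\]
This is exactly the general term of $F^{1:2;1}_{1:1;0}$ with $c-b_2$ in the $(m+n)$-group, $a,b_1$ over $c-b_2$ in the $m$-group, and $c-a$ in the $n$-group. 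Multiplying by $(1-y)^{a+b_2-c}$ gives the second identity.

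The computations themselves are routine. The only delicate point is justifying the interchange of summation after the transformation, because the transformed Gauss series converge only for $|y/(y-1)|<1$. I handle this by restricting to a neighbourhood of the origin, say $|x|<1/4$ and $|y|<1/4$, where both double series converge absolutely. Fubini's theorem for series then applies there, and analytic continuation extends the identities beyond this neighbourhood.
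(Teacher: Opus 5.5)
Your proposal is correct and follows essentially the same route as the paper: you group the $F_1$ series by $m$ into Gauss functions in $y$, apply Pfaff's transformation for the first identity and Euler's for the second, and recombine via $(c-b_2+m)_n=(c-b_2)_{m+n}/(c-b_2)_m$. Your explicit choice of $|x|,|y|<1/4$ to secure absolute convergence of both double series before continuing analytically is a small, valid addition to the paper's general remark on domains of convergence.
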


\begin{proof}
Group the terms in the defining series for $F_1$ by the index $m$. Then
\begin{equation}\label{eq:F1-Gauss-rep}
F_1(a;b_1,b_2;c;x,y)
=\sum_{m=0}^{\infty}\frac{(a)_m(b_1)_m}{(c)_m\,m!}x^m
{}_2F_1\left(\begin{matrix}a+m,b_2\\c+m\end{matrix}\Big|y\right).
\end{equation}
Apply Pfaff's transformation to the Gauss function:
\[
{}_2F_1\left(\begin{matrix}a+m,b_2\\c+m\end{matrix}\Big|y\right)
=(1-y)^{-a-m}
{}_2F_1\left(\begin{matrix}a+m,c+m-b_2\\c+m\end{matrix}\Big|\frac{y}{y-1}\right).
\]
Substituting this identity into \eqref{eq:F1-Gauss-rep} and expanding the Gauss function into a series, we obtain the double series
\begin{multline}\label{eq:F1-double-pfaff}
F_1(a;b_1,b_2;c;x,y)\\
=(1-y)^{-a}\sum_{m=0}^{\infty}\sum_{n=0}^{\infty}
\frac{(a)_m(b_1)_m}{(c)_m\,m!}
\frac{(a+m)_n(c+m-b_2)_n}{(c+m)_n\,n!}
\left(\frac{x}{1-y}\right)^m
\left(\frac{y}{y-1}\right)^n.
\end{multline}
Using
\[
(a)_m(a+m)_n=(a)_{m+n},\qquad
(c)_m(c+m)_n=(c)_{m+n},\qquad
(c+m-b_2)_n=\frac{(c-b_2)_{m+n}}{(c-b_2)_m},
\]
we rewrite \eqref{eq:F1-double-pfaff} as
\[
(1-y)^{-a}\sum_{m=0}^{\infty}\sum_{n=0}^{\infty}
\frac{(a)_{m+n}(c-b_2)_{m+n}(b_1)_m}
{(c)_{m+n}(c-b_2)_m}
\frac{1}{m!n!}
\left(\frac{x}{1-y}\right)^m
\left(\frac{y}{y-1}\right)^n.
\]
By definition \eqref{eq:KDF}, this series is the left-hand side of \eqref{eq:F1-KdF-explicit-1st} multiplied by $(1-y)^{-a}$, which proves the first identity.

For the second identity, we again use \eqref{eq:F1-Gauss-rep}, this time applying Euler's transformation:
\[
{}_2F_1\left(\begin{matrix}a+m,b_2\\c+m\end{matrix}\Big|y\right)
=(1-y)^{c-a-b_2}
{}_2F_1\left(\begin{matrix}c-a,c+m-b_2\\c+m\end{matrix}\Big|y\right).
\]
Substituting this identity and expanding the resulting Gauss function into a series give
\begin{multline}\label{eq:F1-double-euler}
F_1(a;b_1,b_2;c;x,y)\\
=(1-y)^{c-a-b_2}
\sum_{m=0}^{\infty}\sum_{n=0}^{\infty}
\frac{(a)_m(b_1)_m(c-a)_n(c+m-b_2)_n}
{(c)_m(c+m)_n\,m!n!}x^my^n.
\end{multline}
The same identities for the Pochhammer symbols yield
\[
F_1(a;b_1,b_2;c;x,y)
=(1-y)^{c-a-b_2}
\sum_{m=0}^{\infty}\sum_{n=0}^{\infty}
\frac{(c-b_2)_{m+n}(a)_m(b_1)_m(c-a)_n}
{(c)_{m+n}(c-b_2)_m\,m!n!}x^my^n.
\]
The last double series is the Kamp\'e de F\'eriet function in \eqref{eq:F1-KdF-explicit}.
\end{proof}

\begin{remark}\label{rem:F1-KdF-sym}
Interchanging the pairs $(b_1,x)$ and $(b_2,y)$ in Proposition~\ref{prop:F1-KdF}, we obtain, under the conditions $c,c-b_1\notin-\mathbb N_0$,
\begin{equation}\label{eq:F1-KdF-explicit-x}
F^{2:0;1}_{1:0;1}\!\left(
\begin{array}{c}
a,c-b_1:-;b_2\\
c:-;c-b_1
\end{array};\frac{x}{x-1},\frac{y}{1-x}\right)
=(1-x)^{a}
F_1(a;b_1,b_2;c;x,y)
\end{equation}
and
\begin{equation}\label{eq:F1-KdF-explicit-x-2}
F^{1:1;2}_{1:0;1}\!\left(
\begin{array}{c}
c-b_1:c-a;a,b_2\\
c:-;c-b_1
\end{array};x,y\right)
=(1-x)^{a+b_1-c}
F_1(a;b_1,b_2;c;x,y).
\end{equation}
\end{remark}

The following results involve IPD pairs and give reductions to univariate generalized hypergeometric functions. The diagonal identities \eqref{TH3part1} and \eqref{Th4first} are direct consequences of Vandermonde's convolution and the first Miller--Paris transformation; we include them together with the reductions for IPD pairs associated with the individual summation indices.

\begin{theorem}\label{thm:reduction-I}
Let $p\in\mathbb N$, $\mathbf f=(f_1,\dots,f_p)\in\mathbb C^p$, $a,b_1,b_2,c,x\in\mathbb C$, $c,f_1,\dots,f_p\notin-\mathbb N_0$, $\lambda_1=c-b_1-b_2-p$, and $(\lambda_1)_p\ne0$.

If $b_1+b_2\ne f_j$ for $j=1,\dots,p$, then
\begin{equation}\label{TH3part1}
F_{p+1:0;0}^{p+1:1;1}\!\left(
\begin{array}{c}
a,\mathbf f+\mathbf1:b_1;b_2\\
c,\mathbf f:-;-
\end{array};x,x\right)
=(1-x)^{-a}
{}_{p+2}F_{p+1}\left(\begin{matrix}a,\lambda_1,\boldsymbol\mu+\mathbf1\\c,\boldsymbol\mu\end{matrix}\Big|\frac{x}{x-1}\right),
\end{equation}
where $\boldsymbol\mu=(\mu_1,\dots,\mu_p)$ is the vector of roots of
\begin{equation}\label{eq:Rhat}
\widehat R_p(t)=\sum_{k=0}^{p}\sigma_{p-k}(\mathbf f)
\sum_{l=0}^{k}S(k,l)(b_1+b_2)_l(t)_l(\lambda_1-t)_{p-l}.
\end{equation}

If $b_2\ne f_j$ for $j=1,\dots,p$, then
\begin{equation}\label{Th3part2}
F_{1:0;p}^{1:1;p+1}\!\left(
\begin{array}{c}
a:b_1;b_2,\mathbf f+\mathbf1\\
c:-;\mathbf f
\end{array};x,x\right)
=(1-x)^{-a}
{}_{p+2}F_{p+1}\left(\begin{matrix}a,\lambda_1,\boldsymbol\nu+\mathbf1\\c,\boldsymbol\nu\end{matrix}\Big|\frac{x}{x-1}\right),
\end{equation}
where $\boldsymbol\nu=(\nu_1,\dots,\nu_p)$ is the vector of roots of
\begin{equation}\label{eq:Rtilde}
\widetilde R_p(t)=\sum_{k=0}^{p}\sigma_{p-k}(\mathbf f)
\sum_{l=0}^{k}S(k,l)(b_2)_l(t)_l(\lambda_1-t)_{p-l}.
\end{equation}
\end{theorem}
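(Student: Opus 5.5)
For \eqref{TH3part1} I would set $y=x$ in the double series and group terms by $N=m+n$. Every factor depending on $m+n$ then becomes a function of $N$, and the remaining inner sum is
\[
\sum_{m+n=N}\frac{(b_1)_m(b_2)_n}{m!\,n!}=\frac{(b_1+b_2)_N}{N!}
\]
by Vandermonde's convolution. Hence the left-hand side collapses to
\[
{}_{p+2}F_{p+1}\left(\begin{matrix}a,b_1+b_2,\mathbf f+\mathbf1\\c,\mathbf f\end{matrix}\Big|x\right).
\]
I then apply the first Miller--Paris transformation \eqref{MP1}, with its parameter $a$ kept as $a$ and its parameter $b$ taken as $b_1+b_2$. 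Its $\lambda=c-b-p$ becomes $c-b_1-b_2-p=\lambda_1$. Its characteristic polynomial $M_p(t)$ in \eqref{MP1Pol} becomes, term by term, $\widehat R_p(t)$ in \eqref{eq:Rhat}. This gives \eqref{TH3part1} with $\boldsymbol\mu$ the roots of $\widehat R_p$.

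The conditions $b_1+b_2\ne f_j$ and $(\lambda_1)_p\ne0$ are exactly the nondegeneracy hypotheses under which \eqref{MP1} is applied in this setting. They mirror the hypotheses $a\ne f_j$ and $(\lambda)_p\ne0$ of Theorem~\ref{thm:mn-transform}. In particular $\widehat R_p(0)=(\lambda_1)_p(\mathbf f)_1\neq0$, so no root vanishes and the quotient $(\boldsymbol\mu+\mathbf1)_r/(\boldsymbol\mu)_r$ is well defined.

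For \eqref{Th3part2} I would not re-derive anything. I would specialize \eqref{eq:37} of Theorem~\ref{thm:m-n-transform} to $y=x$, whose hypotheses ($b_2\ne f_j$, $(\lambda_1)_p\ne0$) are the same. On the right-hand side the first argument becomes $(x-y)/(1-y)=0$, so only the $m=0$ terms of the double series survive. The factors $(b_1)_m$ and $m!$ then drop out, and $(a)_{m+n}/(c)_{m+n}$ reduces to $(a)_n/(c)_n$. What remains is
\[
(1-x)^{-a}\,{}_{p+2}F_{p+1}\left(\begin{matrix}a,\lambda_1,\check{\betta}+\mathbf1\\c,\check{\betta}\end{matrix}\Big|\frac{x}{x-1}\right).
\]
Comparing \eqref{eq:Qcheck} with \eqref{eq:Rtilde} shows $\check Q_p=\widetilde R_p$ identically, so $\check{\betta}=\boldsymbol\nu$, which proves the claim.

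The main point needing care is not algebraic but the legitimacy of these specializations. Setting $y=x$ in the double series and setting an argument equal to $0$ in \eqref{eq:37} must be justified. Both identities hold in a neighbourhood of $x=0$ where every series involved converges absolutely, so the rearrangement by $N=m+n$ and the substitution are valid there. The identities then extend to other $x$ by analytic continuation, in accordance with the paper's convention. One should also note that the polynomial interpretation of IPD pairs with negative-integer lower parameters is consistent on both sides of each identity, which is inherited from \eqref{MP1} and Theorem~\ref{thm:m-n-transform}.
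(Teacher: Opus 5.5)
Your proposal is correct and follows the paper's proof step for step. For \eqref{TH3part1} you collapse the diagonal series by Vandermonde's convolution and then apply \eqref{MP1} with $b=b_1+b_2$; for \eqref{Th3part2} you specialize \eqref{eq:37} to $y=x$ and identify $\check Q_p$ with $\widetilde R_p$. Your ``nondegeneracy'' remark can be made precise: the coefficient of $t^p$ in $\widehat R_p$ is $(-1)^p\prod_{j=1}^p(f_j-b_1-b_2)$, and the same holds with $b_2$ in place of $b_1+b_2$ for $\widetilde R_p$, so the hypotheses $b_1+b_2\ne f_j$ and $b_2\ne f_j$ are exactly what guarantee $p$ roots, while $(\lambda_1)_p(\mathbf f)_1\ne0$ guarantees that none of them vanishes.
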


\begin{proof}
We first prove \eqref{TH3part1}. By the definition of the
Kamp\'e de F\'eriet function,
\begin{equation}\label{eq:redI-double}
\begin{gathered}
F_{p+1:0;0}^{p+1:1;1}\!\left(
\begin{array}{c}
a,\mathbf f+\mathbf1:b_1;b_2\\
c,\mathbf f:-;-
\end{array};x,x\right)
=\sum_{m=0}^{\infty}\sum_{n=0}^{\infty}
\frac{(a)_{m+n}(\mathbf f+\mathbf1)_{m+n}(b_1)_m(b_2)_n}
{(c)_{m+n}(\mathbf f)_{m+n}}
\frac{x^{m+n}}{m!\,n!}.
\end{gathered}
\end{equation}
For $|x|<1$, this series converges absolutely. We group its
terms according to $r=m+n$. For each fixed $r$, we have
$n=r-m$ and $m=0,\dots,r$, so
\begin{multline*}
\sum_{m=0}^{\infty}\sum_{n=0}^{\infty}
\frac{(a)_{m+n}(\mathbf f+\mathbf1)_{m+n}(b_1)_m(b_2)_n}
{(c)_{m+n}(\mathbf f)_{m+n}}
\frac{x^{m+n}}{m!\,n!}=
\\\sum_{r=0}^{\infty}\sum_{m=0}^{r}
\frac{(a)_r(\mathbf f+\mathbf1)_r(b_1)_m(b_2)_{r-m}}
{(c)_r(\mathbf f)_r}
\frac{x^r}{m!\,(r-m)!}=
\\\sum_{r=0}^{\infty}
\frac{(a)_r(\mathbf f+\mathbf1)_r}
{(c)_r(\mathbf f)_r}\,x^r
\sum_{m=0}^{r}
\frac{(b_1)_m(b_2)_{r-m}}{m!\,(r-m)!}.
\end{multline*}

By Vandermonde's convolution,
\begin{equation}\label{eq:vandermonde-reduction}
\sum_{m=0}^{r}
\frac{(b_1)_m(b_2)_{r-m}}{m!\,(r-m)!}
=\frac{(b_1+b_2)_r}{r!}.
\end{equation}
Substituting \eqref{eq:vandermonde-reduction} into the
preceding expression and using the defining series of the
generalized hypergeometric function, we obtain
\begin{equation}\label{eq:redI-single}
F_{p+1:0;0}^{p+1:1;1}\!\left(
\begin{array}{c}
a,\mathbf f+\mathbf1:b_1;b_2\\
c,\mathbf f:-;-
\end{array};x,x\right)
=
{}_{p+2}F_{p+1}\!\left(
\begin{matrix}
a,b_1+b_2,\mathbf f+\mathbf1\\
c,\mathbf f
\end{matrix}\Big|x\right).
\end{equation}

We now apply the first Miller--Paris transformation
\eqref{MP1} to the right-hand side of \eqref{eq:redI-single}
with $b=b_1+b_2$. Then $\lambda=c-(b_1+b_2)-p=\lambda_1$,
and the characteristic polynomial defined by
\eqref{MP1Pol} satisfies
\[
\begin{aligned}
\left.M_p(t)\right|_{b=b_1+b_2,\;\lambda=\lambda_1}
=\sum_{k=0}^{p}\sigma_{p-k}(\mathbf f)
\sum_{l=0}^{k}S(k,l)(b_1+b_2)_l(t)_l
(\lambda_1-t)_{p-l}
=\widehat R_p(t),
\end{aligned}
\]
where the last equality follows from \eqref{eq:Rhat}.
Thus, the roots of the characteristic polynomial are
$\mu_1,\dots,\mu_p$, and \eqref{MP1} yields
\eqref{TH3part1}.

We turn to \eqref{Th3part2}. Setting $y=x$ in transformation \eqref{eq:37} gives
\begin{equation}\label{eq:redI-second-kdf}
F_{1:0;p}^{1:1;p+1}\!\left(
\begin{array}{c}
a:b_1;b_2,\mathbf f+\mathbf1\\
c:-;\mathbf f
\end{array};x,x\right)
=(1-x)^{-a}
F_{1:0;p}^{1:1;p+1}\!\left(
\begin{array}{c}
a:b_1;\lambda_1,\check{\betta}+\mathbf1\\
c:-;\check{\betta}
\end{array};0,\frac{x}{x-1}\right).
\end{equation}
Expand the function on the right into a double series. Since its first argument is zero, all terms with $m\ge1$ vanish, leaving
\[
\sum_{n=0}^{\infty}
\frac{(a)_n(\lambda_1)_n(\check{\betta}+\mathbf1)_n}
{(c)_n(\check{\betta})_n\,n!}
\left(\frac{x}{x-1}\right)^n
={}_{p+2}F_{p+1}\left(
\begin{matrix}a,\lambda_1,\check{\betta}+\mathbf1\\c,\check{\betta}\end{matrix}\Big|\frac{x}{x-1}\right).
\]
In transformation \eqref{eq:37}, the vector
$\check{\boldsymbol\eta}$ consists of the roots of
$\check Q_p(t)$. By \eqref{eq:Qcheck} and \eqref{eq:Rtilde},
\[
\begin{aligned}
\check Q_p(t)
=\sum_{k=0}^{p}\sigma_{p-k}(\mathbf f)
\sum_{l=0}^{k}S(k,l)(b_2)_l(t)_l
(\lambda_1-t)_{p-l}
=\widetilde R_p(t).
\end{aligned}
\]
Hence, $\check{\boldsymbol\eta}$ coincides with
$\boldsymbol\nu$ up to a permutation of its components.
Since the generalized hypergeometric function is invariant
under such a permutation of the parameter pairs,
\eqref{Th3part2} follows.
\end{proof}

\begin{theorem}\label{thm:reduction-II}
Let $p\in\mathbb N$, $\mathbf f=(f_1,\dots,f_p)\in\mathbb C^p$, $a,b_1,b_2,c,y\in\mathbb C$, $c,f_1,\dots,f_p\notin-\mathbb N_0$, $\lambda=c-a-p$, and $(\lambda)_p\ne0$.

If $a\ne f_j$ for $j=1,\dots,p$, then
\begin{equation}\label{Th4first}
F_{p+1:0;0}^{p+1:1;1}\!\left(
\begin{array}{c}
a,\mathbf f+\mathbf1:b_1;b_2\\
c,\mathbf f:-;-
\end{array};y,y\right)
=(1-y)^{-b_1-b_2}
{}_{p+2}F_{p+1}\left(\begin{matrix}\lambda,b_1+b_2,\widetilde{\boldsymbol\mu}+\mathbf1\\c,\widetilde{\boldsymbol\mu}\end{matrix}\Big|\frac{y}{y-1}\right),
\end{equation}
where $\widetilde{\boldsymbol\mu}=(\widetilde\mu_1,\dots,\widetilde\mu_p)$ is the vector of roots of
\begin{equation}\label{eq:Qmu-tilde}
\widetilde T_p(t)=\sum_{k=0}^{p}\sigma_{p-k}(\mathbf f)
\sum_{l=0}^{k}S(k,l)(a)_l(\lambda-t)_{p-l}(t)_l.
\end{equation}

Furthermore, if $(b_1+b_2)_p\ne0$ and the polynomial $\widehat T_p$ defined below has degree $p$, then
\begin{equation}\label{Th4second}
F_{1:p;0}^{1:p+1;1}\!\left(
\begin{array}{c}
a:b_1,\mathbf f+\mathbf1;b_2\\
c:\mathbf f;-
\end{array};y,y\right)
=(1-y)^{-b_1-b_2}
{}_{p+2}F_{p+1}\left(\begin{matrix}\lambda,b_1+b_2,\widehat{\boldsymbol\mu}+\mathbf1\\c,\widehat{\boldsymbol\mu}\end{matrix}\Big|\frac{y}{y-1}\right),
\end{equation}
where $\widehat{\boldsymbol\mu}=(\widehat\mu_1,\dots,\widehat\mu_p)$ is the vector of roots of
\begin{equation}\label{eq:Qmu-hat}
\widehat T_p(t)=\sum_{k=0}^{p}\sigma_{p-k}(\mathbf f)
\sum_{l=0}^{k}S(k,l)\frac{(a)_l(b_1)_l}{(b_1+b_2)_l}(\lambda-t)_{p-l}(t)_l.
\end{equation}
If $(b_1+b_2)_p\ne0$ and the polynomial $\check T_p$ defined below has degree $p$, then
\begin{equation}\label{Th4third}
F_{1:0;p}^{1:1;p+1}\!\left(
\begin{array}{c}
a:b_1;b_2,\mathbf f+\mathbf1\\
c:-;\mathbf f
\end{array};y,y\right)
=(1-y)^{-b_1-b_2}
{}_{p+2}F_{p+1}\left(\begin{matrix}\lambda,b_1+b_2,\check{\boldsymbol\mu}+\mathbf1\\c,\check{\boldsymbol\mu}\end{matrix}\Big|\frac{y}{y-1}\right),
\end{equation}
where $\check{\boldsymbol\mu}=(\check\mu_1,\dots,\check\mu_p)$ is the vector of roots of
\begin{equation}\label{eq:Qmu-check}
\check T_p(t)=\sum_{k=0}^{p}\sigma_{p-k}(\mathbf f)
\sum_{l=0}^{k}S(k,l)\frac{(a)_l(b_2)_l}{(b_1+b_2)_l}(\lambda-t)_{p-l}(t)_l.
\end{equation}
\end{theorem}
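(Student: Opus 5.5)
The first identity \eqref{Th4first} I would take directly from Theorem~\ref{thm:mn-transform}. Set $x=y$ in \eqref{KFTH1}. Both arguments on the right then equal $y/(y-1)$, and the prefactor becomes $(1-y)^{-b_1-b_2}$. The right-hand function is again of type $F^{p+1:1;1}_{p+1:0;0}$ with equal arguments, so I would group its terms by $r=m+n$ exactly as in the proof of \eqref{TH3part1}. Vandermonde's convolution \eqref{eq:vandermonde-reduction} collapses the inner sum to $(b_1+b_2)_r/r!$, which gives ${}_{p+2}F_{p+1}(\lambda,b_1+b_2,\bzeta+\mathbf1;c,\bzeta\,|\,y/(y-1))$. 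Finally, $Q_p$ in \eqref{KFTH1Pol} coincides with $\widetilde T_p$ in \eqref{eq:Qmu-tilde}, so $\bzeta=\widetilde{\boldsymbol\mu}$ up to a permutation.

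For \eqref{Th4second} there is no earlier transformation with the right output variable, so I would argue directly, following the structure of the proof of Theorem~\ref{thm:mn-transform} but in one variable. Put $B=b_1+b_2$.

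\emph{Step 1: reduce to one variable.} Group the diagonal series by $r=m+n$ and insert the expansion \eqref{eq:m-ipd-expand} of the IPD factor in falling factorials $[m]_l$. The inner sum is then
\[
\sum_{m}\frac{[m]_l(b_1)_m(b_2)_{r-m}}{m!(r-m)!}.
\]
After the shift $m\mapsto m+l$ and Vandermonde's convolution, it equals
\[
(b_1)_l\frac{(B+l)_{r-l}}{(r-l)!}=\frac{(b_1)_l}{(B)_l}\,\frac{(B)_r[r]_l}{r!}.
\]
This step uses $(B)_p\ne0$. Hence the left side of \eqref{Th4second} equals
\[
\frac{1}{(\mathbf f)_1}\sum_{k=0}^{p}\sigma_{p-k}(\mathbf f)\sum_{l=0}^{k}S(k,l)\frac{(b_1)_l}{(B)_l}\sum_{r\ge0}\frac{(a)_r(B)_r[r]_l}{(c)_r\,r!}y^r .
\]

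\emph{Step 2: Pfaff transformation.} Shifting $r\mapsto r+l$ turns the inner series into
\[
\frac{(a)_l(B)_l}{(c)_l}\,y^l\,{}_2F_1\left(\begin{matrix}a+l,B+l\\c+l\end{matrix}\Big|y\right).
\]
I would apply Pfaff's transformation in the form $(1-y)^{-B-l}\,{}_2F_1(c-a,B+l;c+l\,|\,y/(y-1))$. Note that $(B)_l$ cancels against the denominator, leaving $(a)_l(b_1)_l/(B)_l$.

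\emph{Step 3: reindex and identify the polynomial.} Reindex with $n=u+l$ and use $(n-l)!=(-1)^l n!/(-n)_l$ together with the analogue of \eqref{eq:mn-lambda-shift}, namely $(c-a)_{n-l}=(\lambda)_n(\lambda+n)_{p-l}/(\lambda)_p$. The powers of $y$ and $(1-y)$ combine into $(1-y)^{-B}(y/(y-1))^n$. The resulting coefficient of $(\lambda)_n(B)_n/((c)_n n!)$ is
\[
\frac{1}{(\lambda)_p(\mathbf f)_1}\sum_{k=0}^{p}\sigma_{p-k}(\mathbf f)\sum_{l=0}^{k}S(k,l)\frac{(a)_l(b_1)_l}{(B)_l}(\lambda+n)_{p-l}(-n)_l=\frac{\widehat T_p(-n)}{(\lambda)_p(\mathbf f)_1}.
\]

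\emph{Step 4: factor.} Since $\widehat T_p(0)=(\lambda)_p(\mathbf f)_1\ne0$ and, by hypothesis, $\widehat T_p$ has degree $p$, it factors as in \eqref{eq:Q-factor-mn}. This gives $\widehat T_p(-n)/\widehat T_p(0)=(\widehat{\boldsymbol\mu}+\mathbf1)_n/(\widehat{\boldsymbol\mu})_n$, and \eqref{Th4second} follows.

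The degree hypothesis is where I expect the only real care to be needed. The leading coefficient of $\widehat T_p$ can vanish, unlike in the earlier theorems, because the factor $(b_1)_l/(B)_l$ may cancel the top-degree contribution. This is exactly why that hypothesis is imposed, and the factorization must be justified under it. Otherwise the main obstacle is bookkeeping of the signs $(-1)^l$ against $(y/(y-1))^l$ in Step 3, which goes exactly as in the proof of Theorem~\ref{thm:mn-transform}.

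Identity \eqref{Th4third} then follows from \eqref{Th4second} by interchanging $(b_1,m)$ with $(b_2,n)$. On the diagonal this interchange leaves the argument unchanged and replaces $(b_1)_l$ by $(b_2)_l$, turning $\widehat T_p$ into $\check T_p$.
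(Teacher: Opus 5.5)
Your proposal is correct. For \eqref{Th4second} it is essentially the paper's computation, but you reach \eqref{Th4first} and \eqref{Th4third} by different routes.

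For \eqref{Th4first}, the paper first collapses the diagonal series to ${}_{p+2}F_{p+1}(a,b_1+b_2,\mathbf f+\mathbf1;c,\mathbf f\,|\,y)$ by Vandermonde's convolution. It then applies the univariate transformation \eqref{MP1} with the roles of $a$ and $b$ interchanged. You instead specialize the bivariate Theorem~\ref{thm:mn-transform} to $x=y$ and collapse the transformed series afterwards. Both routes rest on the same hypotheses $a\ne f_j$ and $(\lambda)_p\ne0$; the leading coefficient of $Q_p=\widetilde T_p$ is $(-1)^p\prod_j(f_j-a)$, so the first hypothesis guarantees degree $p$. The paper's route needs only the classical univariate result, while yours shows that \eqref{Th4first} is literally the diagonal of \eqref{KFTH1}.

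For \eqref{Th4second}, the paper goes through \eqref{eq:KdF-m-F1} at $x=y$, then applies \eqref{Sh28} and Pfaff's transformation; on the diagonal, \eqref{Sh28} is just Euler's transformation of ${}_2F_1(a+l,B+l;c+l\,|\,y)$. Your weighted Vandermonde step lands on ${}_2F_1(a+l,B+l;c+l\,|\,y)$ directly, so a single Pfaff transformation suffices. From there, the reindexing and the identification of $\widehat T_p(-r)$ coincide with the paper's.

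For \eqref{Th4third}, you rename $m\leftrightarrow n$ on the diagonal. This turns the function into that of \eqref{Th4second} with $b_1\leftrightarrow b_2$ and leaves $B$ and $\lambda$ unchanged. That is shorter than the paper's repetition of the argument with \eqref{Sh29}.

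There is one small bookkeeping slip. At the end of Step~2 the surviving factor is $(a)_l(b_1)_l/(c)_l$, not $(a)_l(b_1)_l/(B)_l$. The denominator $(B)_l$ only appears in Step~3, via $(B+l)_{n-l}=(B)_n/(B)_l$, while $(c)_l$ cancels against $(c+l)_{n-l}=(c)_n/(c)_l$. Your final expression for the coefficient in Step~3 is right.
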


\begin{proof}
We begin with \eqref{Th4first}. Setting $x=y$ in the
diagonal reduction \eqref{eq:redI-single}, we obtain
\begin{equation}\label{eq:redII-first-single}
F_{p+1:0;0}^{p+1:1;1}\!\left(
\begin{array}{c}
a,\mathbf f+\mathbf1:b_1;b_2\\
c,\mathbf f:-;-
\end{array};y,y\right)
=
{}_{p+2}F_{p+1}\!\left(
\begin{matrix}
a,b_1+b_2,\mathbf f+\mathbf1\\
c,\mathbf f
\end{matrix}\Big|y\right).
\end{equation}

By symmetry in the first two upper parameters, we may
apply the first Miller--Paris transformation \eqref{MP1}
with $a$ replaced by $b_1+b_2$ and $b$ replaced by $a$.
The transformed parameter is then $\lambda=c-a-p$.
For the characteristic polynomial defined by
\eqref{MP1Pol}, this substitution gives
\[
\begin{aligned}
\left.M_p(t)\right|_{b=a,\;\lambda=c-a-p}
=\sum_{k=0}^{p}\sigma_{p-k}(\mathbf f)
\sum_{l=0}^{k}S(k,l)(a)_l(t)_l
(\lambda-t)_{p-l}
=\widetilde T_p(t),
\end{aligned}
\]
where the last equality follows from \eqref{eq:Qmu-tilde}.
Thus, the roots of the characteristic polynomial are
$\widetilde\mu_1,\dots,\widetilde\mu_p$, and
\eqref{MP1} yields
\[
\begin{gathered}
{}_{p+2}F_{p+1}\!\left(
\begin{matrix}
a,b_1+b_2,\mathbf f+\mathbf1\\
c,\mathbf f
\end{matrix}\Big|y\right)
=(1-y)^{-b_1-b_2}
{}_{p+2}F_{p+1}\!\left(
\begin{matrix}
\lambda,b_1+b_2,\widetilde{\boldsymbol\mu}+\mathbf1\\
c,\widetilde{\boldsymbol\mu}
\end{matrix}\Big|\frac{y}{y-1}\right).
\end{gathered}
\]
Combining this identity with \eqref{eq:redII-first-single}
proves \eqref{Th4first}.

To prove \eqref{Th4second}, use the representation \eqref{eq:KdF-m-F1} obtained above and set $x=y$:
\begin{multline}\label{eq:redII-m-F1}
F_{1:p;0}^{1:p+1;1}\!\left(
\begin{array}{c}
a:b_1,\mathbf f+\mathbf1;b_2\\
c:\mathbf f;-
\end{array};y,y\right)\\
=\frac{1}{(\mathbf f)_1}\sum_{k=0}^{p}\sigma_{p-k}(\mathbf f)
\sum_{l=0}^{k}S(k,l)
\frac{(a)_l(b_1)_l}{(c)_l}y^l
F_1(a+l;b_1+l,b_2;c+l;y,y).
\end{multline}
Apply \eqref{Sh28} to $F_1$. Since both original arguments are equal to $y$, the second argument of the transformed function is zero, and
\begin{equation}\label{eq:redII-Sh28}
F_1(a+l;b_1+l,b_2;c+l;y,y)
=(1-y)^{c-a-b_1-b_2-l}
{}_2F_1\left(\begin{matrix}c-a,c-b_1-b_2\\c+l\end{matrix}\Big|y\right).
\end{equation}
We now apply Pfaff's transformation to the Gauss function in \eqref{eq:redII-Sh28}:
\[
{}_2F_1\left(\begin{matrix}c-a,c-b_1-b_2\\c+l\end{matrix}\Big|y\right)
=(1-y)^{a-c}
{}_2F_1\left(\begin{matrix}c-a,b_1+b_2+l\\c+l\end{matrix}\Big|\frac{y}{y-1}\right).
\]
Substituting this into \eqref{eq:redII-m-F1}, we obtain
\begin{multline}\label{eq:redII-before-shift}
F_{1:p;0}^{1:p+1;1}\!\left(
\begin{array}{c}
a:b_1,\mathbf f+\mathbf1;b_2\\
c:\mathbf f;-
\end{array};y,y\right)\\
=(1-y)^{-b_1-b_2}\frac{1}{(\mathbf f)_1}
\sum_{k=0}^{p}\sigma_{p-k}(\mathbf f)
\sum_{l=0}^{k}S(k,l)
\frac{(a)_l(b_1)_l}{(c)_l}
\left(\frac{y}{1-y}\right)^l\\
\times{}_2F_1\left(\begin{matrix}c-a,b_1+b_2+l\\c+l\end{matrix}\Big|\frac{y}{y-1}\right).
\end{multline}
Expand ${}_2F_1$ into a series with summation index $s\ge0$ and set $r=s+l$, so that $s=r-l$. We use
\[
(r-l)!=(-1)^l\frac{r!}{(-r)_l},\qquad
(c-a)_{r-l}=\frac{(\lambda)_r(\lambda+r)_{p-l}}{(\lambda)_p},
\]
as well as
\[
(b_1+b_2+l)_{r-l}=\frac{(b_1+b_2)_r}{(b_1+b_2)_l},
\qquad
(c+l)_{r-l}=\frac{(c)_r}{(c)_l}.
\]
Thus, \eqref{eq:redII-before-shift} takes the form
\begin{multline}\label{eq:Th4second-stage}
F_{1:p;0}^{1:p+1;1}\!\left(
\begin{array}{c}
a:b_1,\mathbf f+\mathbf1;b_2\\
c:\mathbf f;-
\end{array};y,y\right)
=(1-y)^{-b_1-b_2}
\sum_{r=0}^{\infty}
\frac{(\lambda)_r(b_1+b_2)_r}{(c)_r\,r!}
\left(\frac{y}{y-1}\right)^r\\
\times\frac{1}{(\lambda)_p(\mathbf f)_1}
\sum_{k=0}^{p}\sigma_{p-k}(\mathbf f)
\sum_{l=0}^{k}S(k,l)
\frac{(a)_l(b_1)_l}{(b_1+b_2)_l}
(\lambda+r)_{p-l}(-r)_l.
\end{multline}
Regarding the inner finite sum in \eqref{eq:Th4second-stage} as a polynomial in $r$ and substituting $r=-t$, we obtain
\[
\sum_{k=0}^{p}\sigma_{p-k}(\mathbf f)
\sum_{l=0}^{k}S(k,l)
\frac{(a)_l(b_1)_l}{(b_1+b_2)_l}
(\lambda-t)_{p-l}(t)_l,
\]
which is $\widehat T_p(t)$ defined by \eqref{eq:Qmu-hat}. At $t=0$, we have
\[
\widehat T_p(0)=(\lambda)_p(\mathbf f)_1.
\]
If $\widehat\mu_1,\dots,\widehat\mu_p$ are the roots of $\widehat T_p$, then
\[
\frac{\widehat T_p(-r)}{\widehat T_p(0)}
=\prod_{j=1}^{p}\frac{r+\widehat\mu_j}{\widehat\mu_j}
=\frac{(\widehat{\boldsymbol\mu}+\mathbf1)_r}{(\widehat{\boldsymbol\mu})_r}.
\]
Substitution into \eqref{eq:Th4second-stage} gives \eqref{Th4second}.

To prove \eqref{Th4third}, set $x=y$ in \eqref{eq:KdF-n-F1}:
\begin{multline}\label{eq:redII-n-F1}
F_{1:0;p}^{1:1;p+1}\!\left(
\begin{array}{c}
a:b_1;b_2,\mathbf f+\mathbf1\\
c:-;\mathbf f
\end{array};y,y\right)\\
=\frac{1}{(\mathbf f)_1}\sum_{k=0}^{p}\sigma_{p-k}(\mathbf f)
\sum_{l=0}^{k}S(k,l)
\frac{(a)_l(b_2)_l}{(c)_l}y^l
F_1(a+l;b_1,b_2+l;c+l;y,y).
\end{multline}
Applying transformation \eqref{Sh29}, we obtain
\[
F_1(a+l;b_1,b_2+l;c+l;y,y)
=(1-y)^{c-a-b_1-b_2-l}
{}_2F_1\left(\begin{matrix}c-a,c-b_1-b_2\\c+l\end{matrix}\Big|y\right).
\]
A further application of Pfaff's transformation gives the same Gauss function as in the proof of \eqref{Th4second}. Expanding it into a series with summation index $s\ge0$ and setting $r=s+l$, we obtain
\begin{multline}\label{eq:Th4third-stage}
F_{1:0;p}^{1:1;p+1}\!\left(
\begin{array}{c}
a:b_1;b_2,\mathbf f+\mathbf1\\
c:-;\mathbf f
\end{array};y,y\right)
=(1-y)^{-b_1-b_2}
\sum_{r=0}^{\infty}
\frac{(\lambda)_r(b_1+b_2)_r}{(c)_r\,r!}
\left(\frac{y}{y-1}\right)^r\\
\times\frac{1}{(\lambda)_p(\mathbf f)_1}
\sum_{k=0}^{p}\sigma_{p-k}(\mathbf f)
\sum_{l=0}^{k}S(k,l)
\frac{(a)_l(b_2)_l}{(b_1+b_2)_l}
(\lambda+r)_{p-l}(-r)_l.
\end{multline}
Regarding the inner finite sum in \eqref{eq:Th4third-stage} as a polynomial in $r$ and substituting $r=-t$, we obtain $\check T_p(t)$ defined by \eqref{eq:Qmu-check}. At $t=0$, we have
\[
\check T_p(0)=(\lambda)_p(\mathbf f)_1.
\]
If $\check\mu_1,\dots,\check\mu_p$ are the roots of $\check T_p$, then
\[
\frac{\check T_p(-r)}{\check T_p(0)}
=\prod_{j=1}^{p}\frac{r+\check\mu_j}{\check\mu_j}
=\frac{(\check{\boldsymbol\mu}+\mathbf1)_r}{(\check{\boldsymbol\mu})_r}.
\]
Substitution into \eqref{eq:Th4third-stage} gives \eqref{Th4third}.
\end{proof}

\section*{Funding}

The research was carried out within the state assignment for IAM FEB RAS (No.~075-00460-26-00).

This work was also financially supported by the Ministry of Science and Higher Education of the Russian Federation under Agreement No.~075-02-2026-1312 dated February~20, 2026, for the implementation of development programs for regional mathematical research and education centers (\href{https://www.dcmi.ru/}{Far Eastern Center of Mathematical Research}, Far Eastern Federal University).

\end{document}